\newtheorem{theorem}{Theorem}
\newtheorem{lemma}{Lemma}
\newtheorem{corollary}{Corollary}
\newtheorem{observation}{Observation}
\newcounter{claim}
\newenvironment{claim}[1][]
{\refstepcounter{claim} \begin{trivlist} \item[] {\bf Claim~\theclaim.}\space#1 \itshape}
{\end{trivlist}}
\newenvironment{cpf}
{\begin{trivlist} \item[] {\em Proof of claim. }}
{$\hfill\diamond$ \end{trivlist}}
\newcommand{\transp}{\mathsf T}
\newcommand{\conv}{\operatorname{conv}}
\newcommand{\R}{\mathbb R}
\newcommand{\N}{\mathbb N}
\newcommand{\Z}{\mathbb Z}
\newcommand{\A}{\mathcal{A}}
\newcommand{\tA}{\tilde{\mathcal{A}}}
\newcommand{\order}{\preceq}
\newcommand{\georder}{\succeq}
\begin{document}

\title{Integer packing sets form a well-quasi-ordering}

\author{
Alberto Del Pia
\thanks{Department of Industrial and Systems Engineering \& Wisconsin Institute for Discovery,
             University of Wisconsin-Madison.
             E-mail: {\tt delpia@wisc.edu}.
             }
\and
Dion Gijswijt
\thanks{Delft Institute of Applied Mathematics,
             Delft University of Technology.
             E-mail: {\tt d.c.gijswijt@tudelft.nl}.
             }
\and
Jeff Linderoth
\thanks{Department of Industrial and Systems Engineering,
             University of Wisconsin-Madison.
             E-mail: {\tt linderoth@wisc.edu}.
             }
\and
Haoran Zhu
\thanks{Department of Industrial and Systems Engineering,
             University of Wisconsin-Madison.
             E-mail: {\tt hzhu94@wisc.edu}.
             }
}

\maketitle

\begin{abstract}
An integer packing set is a set of non-negative integer vectors with the property that, if a vector $x$ is in the set, then every non-negative integer vector $y$ with $y \le x$ is in the set as well.
Integer packing sets appear naturally in Integer Optimization.
In fact, the set of integer points in any packing polyhedron is an integer packing set.
The main result of this paper is that integer packing sets, ordered by inclusion, form a well-quasi-ordering.

This result allows us to answer a question recently posed by Bodur et al.
In fact, we prove that the $k$-aggregation closure of any packing polyhedron is again a packing polyhedron.
The generality of our main result allows us to provide a generalization to non-polyhedral sets: 
The $k$-aggregation closure of any downset of $\R^n_+$ is a packing polyhedron.
\end{abstract}

\emph{Key words:}
Well-quasi-ordering; $k$-aggregation closure; polyhedrality; packing polyhedra.

\section{Introduction}
\label{sec intro}
In order theory, a \emph{quasi-order} is a binary relation $\order$ over a set $X$ that is \emph{reflexive:} $\forall a \in X$, $a \order a$, and \emph{transitive: } $\forall a,b,c \in X$, $a \order b$ and $b \order c$ imply $a \order c$. A quasi-order $\order$ is a \emph{well-quasi-order} (\emph{wqo}) if for any infinite sequence $x_1, x_2, \ldots$ of elements from $X$ there are indices $i<j$ such that $x_i \order x_j$.

A classic example of a quasi-order over the set of graphs is given by the graph minor relation. The  Robertson-Seymour Theorem (also known as the graph minor theorem) essentially states that the set of finite graphs is well-quasi-ordered by the graph minor relation. This fundamental result is the culmination of twenty papers written as part of the Graph Minors Project~\cite{robertson2004graph}. Interested readers may find more examples and characterizations in the comprehensive survey paper by Kruskal~\cite{kruskal1972theory}. The main result of this paper is that a quasi-order arising from Integer Optimization is a well-quasi-order.

Let $\N=\{0,1,2,\ldots\}$ denote the set of nonnegative integers and let $[n]=\{1,2,\ldots, n\}$ for any $n\in \N$. We define an \emph{integer packing set} in $\R^n$ as a subset $Q$ of $\N^n$ with the property that: if $x \in Q$, $y \in \N^n$ and $y \le x$, then $y \in Q$.
Note that the relation $\subseteq$ is a quasi-order over the set of integer packing sets.
We are now ready to state our main result.
\begin{theorem}
\label{theo: packingset wqo}
The set of integer packing sets in $\R^n$ is well-quasi-ordered by the relation $\subseteq$.
\end{theorem}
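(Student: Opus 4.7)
My plan is to establish the equivalent statement that the downsets of $\N^n$, ordered by inclusion, form a well-quasi-order, by induction on $n$. The base case $n=1$ is immediate, since the downsets of $\N$ are $\emptyset,\{0\},\{0,1\},\ldots,\N$, a chain under $\subseteq$. For the inductive step, I would decompose each downset $D$ of $\N^n$ into its slices $D^{(i)} := \{y\in \N^{n-1}:(y,i)\in D\}$. Each slice is a downset of $\N^{n-1}$, and the sequence $D^{(0)}\supseteq D^{(1)}\supseteq \cdots$ is weakly decreasing because $D$ is itself a downset. Since the induction hypothesis makes the set of downsets of $\N^{n-1}$ a wqo and every weakly decreasing sequence in a wqo stabilizes, each slice sequence $D_k^{(\cdot)}$ stabilizes at some downset $E_k$ from an index $N_k$ onward.

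Given an infinite sequence $D_1,D_2,\ldots$ of downsets of $\N^n$, I would first apply the induction hypothesis to the sequence $E_1,E_2,\ldots$ to extract an infinite subsequence $k_1<k_2<\cdots$ along which $E_{k_1}\subseteq E_{k_2}\subseteq \cdots$. Next, I view each finite prefix $\tau_t := (D_{k_t}^{(0)},\ldots,D_{k_t}^{(N_{k_t}-1)})$ as a finite word over the wqo of downsets of $\N^{n-1}$ and invoke Higman's Lemma on $\tau_1,\tau_2,\ldots$, obtaining indices $s<t$ and an order-preserving injection $\phi:\{0,\ldots,N_{k_s}-1\}\to \{0,\ldots,N_{k_t}-1\}$ with $D_{k_s}^{(\ell)}\subseteq D_{k_t}^{(\phi(\ell))}$ for all $\ell<N_{k_s}$. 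Because $\phi$ is an order-preserving injection we have $\phi(\ell)\geq \ell$, and the weak decrease of the slices of $D_{k_t}$ then upgrades Higman's embedding to the pointwise inclusion $D_{k_s}^{(\ell)}\subseteq D_{k_t}^{(\phi(\ell))}\subseteq D_{k_t}^{(\ell)}$ for $\ell<N_{k_s}$. For $\ell\geq N_{k_s}$ one has $D_{k_s}^{(\ell)}=E_{k_s}\subseteq E_{k_t}\subseteq D_{k_t}^{(\ell)}$, since every slice of $D_{k_t}$ contains its stable value. Therefore $D_{k_s}\subseteq D_{k_t}$, which completes the induction.

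I expect the main obstacle to be precisely the passage from Higman's embedding relation to a pointwise comparison of slices. This is what forces the two preparatory reductions---stabilization of each slice sequence and alignment of the stable tails across different $k$ via the induction hypothesis---and it is also the reason the inequality $\phi(\ell)\geq \ell$, coming for free from order-preserving injectivity, is essential. Without either ingredient, Higman's Lemma would only yield containment of certain selected slices rather than the full downset containment that wqo requires.
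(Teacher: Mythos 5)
Your proof is correct and follows essentially the same route as the paper: slice each packing set into a decreasing sequence of lower-dimensional packing sets, observe that this sequence stabilizes, and combine Higman's lemma on the finite prefix with the inequality $\phi(\ell)\geq\ell$ (plus monotonicity of the slices) to upgrade Higman's embedding to pointwise inclusion. The only difference is organizational: the paper packages this as an abstract lemma stating that decreasing sequences over a wqo, ordered pointwise, again form a wqo (via the product wqo on $X^*\times X$), whereas you inline the same argument, handling the stable tails by first extracting an increasing subsequence.
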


Integer packing sets appear naturally in Integer Optimization.
A \emph{packing polyhedron} is a set of the form $P = \{x \in \R^n \mid A x \leq b, \ x \geq 0\}$ where the data $A \in \R^{n \times m}, b \in \R^m$ is non-negative.
Clearly, for any packing polyhedron $P$, the set $P \cap \Z^n$, is an integer packing set. However, note that not all integer packing sets are of this form.
This connection between packing polyhedra and integer packing sets allows us to employ Theorem~\ref{theo: packingset wqo} to answer a recently posed open question in Integer Optimization.

In \cite{bodur2017aggregation}, the authors introduce the concept of $k$-aggregation closure for packing and covering polyhedra. Given a packing polyhedron $P = \{x \in \R^n \mid A x \leq b, \ x \geq 0\}$, and a positive integer $k$, the \emph{$k$-aggregation closure} of $P$ is defined by
\begin{equation*}
\A_k(P): = \bigcap_{\lambda^1, \ldots, \lambda^k \in \mathbb{R}^m_+} \conv(\{  x \in \N^n \mid (\lambda^j)^\transp A x \leq (\lambda^j)^\transp b, \ \forall j\in[k] \}).
\end{equation*}

The set $\A_k(P)$ is defined as the intersection of an infinite number of sets, each of which is the convex hull of an integer packing set. A natural question, posed in \cite{bodur2017aggregation}, is whether the set $\A_k(P)$ is polyhedral.
The authors provide a partial answer to this question by showing that $\A_k(P)$ is a polyhedron, provided that every entry of $A$ is positive.
As a consequence of Theorem~\ref{theo: packingset wqo}, we give a complete answer to the posed question.
\begin{theorem}
\label{theo: k-aggregation}
For any packing polyhedron $P$ and any $k \ge 1$, the set $\A_k(P)$ is a packing polyhedron.
\end{theorem}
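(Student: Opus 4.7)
The plan is to use Theorem~\ref{theo: packingset wqo} to collapse the infinite intersection in the definition of $\A_k(P)$ to a finite one, and then to verify that each member of this finite collection is itself a packing polyhedron. Observe first that for every $\lambda=(\lambda^1,\dots,\lambda^k)\in(\R^m_+)^k$, the set $S_\lambda := \{x \in \N^n \mid (\lambda^j)^\transp A x \le (\lambda^j)^\transp b, \ \forall j \in [k]\}$ is an integer packing set: the coefficient vector $A^\transp \lambda^j$ is non-negative since $A,\lambda^j \ge 0$, so $S_\lambda$ is downward-closed in $\N^n$. Thus $\A_k(P) = \bigcap_\lambda \conv(S_\lambda)$.

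First I would apply Theorem~\ref{theo: packingset wqo} to the family $\mathcal{F}=\{S_\lambda : \lambda\in (\R^m_+)^k\}$. Since $\subseteq$ is a well-quasi-order on integer packing sets, $\mathcal{F}$ has no infinite antichain and no infinite strictly descending chain. Hence the set $\mathcal{M}$ of $\subseteq$-minimal elements of $\mathcal{F}$ is a finite antichain, and every $S_\lambda$ contains some $S^\star \in \mathcal{M}$. Monotonicity of the convex hull under inclusion then gives $\conv(S_\lambda) \supseteq \conv(S^\star)$, so
\[
\A_k(P) \;=\; \bigcap_{S^\star \in \mathcal{M}} \conv(S^\star),
\]
a finite intersection.

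Next, I would show that $\conv(S_\lambda)$ is a packing polyhedron for every $\lambda$. Set $c_j = A^\transp \lambda^j \ge 0$ and $d_j = b^\transp \lambda^j \ge 0$, and partition $[n] = I_+ \sqcup I_0$ according to whether coordinate $i$ satisfies $c_{ji}>0$ for some $j$ or $c_{ji}=0$ for all $j$. Then $S_\lambda = F \times \N^{I_0}$, where $F := \{y \in \N^{I_+} : (c_j)_{I_+}^\transp y \le d_j, \ \forall j\}$ is \emph{finite} (each $i \in I_+$ has a positive coefficient in some constraint, bounding $y_i$). Consequently $\conv(S_\lambda) = \conv(F) + \operatorname{cone}\{e_i : i \in I_0\}$ is a polyhedron. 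To upgrade polyhedrality to ``packing'', I would check that $\conv(S_\lambda)$ is a downset in $\R^n_+$: given $x=\sum_i \alpha_i s^i \in \conv(S_\lambda)$ and $0 \le y \le x$, it suffices (by decreasing one coordinate at a time) to handle the case $y = x - \delta e_j$ with $\delta \in [0,x_j]$. Distributing $\delta$ among the $s^i$ as $\delta_i \in [0,s^i_j]$ with $\sum_i \alpha_i\delta_i = \delta$, and writing each $\delta_i$ as a convex combination of $\lfloor\delta_i\rfloor$ and $\lceil\delta_i\rceil$, exhibits $y$ as a convex combination of elements of $S_\lambda$, using that $S_\lambda$ is a downset in $\N^n$. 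A polyhedral downset of $\R^n_+$ containing the origin is a packing polyhedron, since every non-trivial facet-defining inequality must then have a non-negative normal.

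Finally, the intersection of the finitely many packing polyhedra $\conv(S^\star)$, $S^\star \in \mathcal{M}$, is again a packing polyhedron by concatenating their non-negative inequality systems, yielding the theorem. The main obstacle is the middle step: Theorem~\ref{theo: packingset wqo} gives a finite set of minimal integer packing sets but says nothing about the geometry of their convex hulls, so polyhedrality and the non-negative description have to be established separately, handling both the unbounded coordinates $I_0$ and the fractional decrements that arise when verifying that the convex hull remains a downset.
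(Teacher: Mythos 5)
Your proposal is correct, and it follows the paper's overall architecture (each $S_\lambda$ is an integer packing set; Theorem~\ref{theo: packingset wqo} plus the finite basis property collapses the intersection to a finite one; a finite intersection of packing polyhedra is packing), but it diverges on the key sub-lemma that each $\conv(S_\lambda)$ is a packing polyhedron. The paper proves this (Lemma~\ref{lem: integerhull}) by perturbing each possibly irrational inequality $(a^i)^\transp x\leq b_i$ to a rational inequality $(c^i)^\transp x\leq d_i$ with the same integer solutions and then invoking Meyer's theorem, finishing with the characterization of polyhedral downsets of $\R^n_+$ as packing polyhedra (Lemma~\ref{lem: downset-in-R+}). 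You instead obtain polyhedrality directly and more elementarily: splitting coordinates into those appearing with a positive coefficient in some aggregated constraint and the rest gives $S_\lambda=F\times\N^{I_0}$ with $F$ finite, hence $\conv(S_\lambda)=\conv(F)+\operatorname{cone}\{e_i : i\in I_0\}$ is a polyhedron by Minkowski--Weyl, with no rationality issues and no appeal to Meyer's theorem. Your explicit verification that $\conv(S_\lambda)$ is a downset of $\R^n_+$ (via fractional decrements rounded to $\lfloor\delta_i\rfloor$ and $\lceil\delta_i\rceil$) supplies a detail the paper dispatches with ``it is clear,'' and your final step---a polyhedral downset of $\R^n_+$ containing the origin is a packing polyhedron---is exactly the paper's Lemma~\ref{lem: downset-in-R+}, which is worth citing or proving via the $a_+$ truncation of Lemma~\ref{lem: domination} rather than the slightly informal facet-normal remark. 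On balance your route is self-contained and avoids Meyer's theorem; the paper's route factors the work through a reusable statement about integer hulls of arbitrary packing polyhedra, which it then reuses for Theorems~\ref{theo: k-aggregation-general} and~\ref{theo: LAST}.
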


The generality of our proving techniques allows us to provide a generalization of Theorem~\ref{theo: k-aggregation} to the setting where the given set is a downset of $\R^n_+$ instead of a polyhedron.
We recall that a \emph{downset} of $\R^n_+$ is a subset $D$ of $\R^n_+$ with the property that, if $x \in D$, $y \in \R^n_+$ and $y \leq x$, then $y \in D$.
Clearly, a packing polyhedron in $\R^n$ is a downset of $\R^n_+$, but not all downsets are polyhedral.
Our generalization relies on a natural extension of the definition of $k$-aggregation closure to downsets of $\R^n_+$. For any downset $D$ of $\R^n_+$, we denote by 
$$
\Lambda(D): = \{f \in \R^n \mid \sup\{f^\transp x \mid x \in D\} < \infty\}.
$$ 
In particular, note that $f^\transp x \leq \beta$ is valid for $D$ if and only if $f \in \Lambda(D)$ and $\beta \geq  \sup\{f^\transp x \mid x \in D\}$. Then, the \emph{$k$-aggregation closure} of $D$ is defined by
$$
\tA_k (D): = \bigcap_{f^1, \ldots, f^k \in \Lambda(D)} \conv(\{x \in \N^n \mid (f^j)^\transp x \leq \sup\{(f^j)^\transp d \mid d \in D\}, \ \forall j\in[k] \}).
$$

The next observation shows that $\tA_k$ is indeed a generalization of $\A_k$.

\begin{observation}\label{obs reduce}
For any packing polyhedron $P$ and any $k \ge 1$, we have $\tilde{\mathcal A}_k(P) = \A_k(P)$.
\end{observation}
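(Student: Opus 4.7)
The plan is to prove the two inclusions $\tilde{\mathcal A}_k(P) \subseteq \A_k(P)$ and $\A_k(P) \subseteq \tilde{\mathcal A}_k(P)$ separately, using the LP duality bridge between nonnegative multipliers $\lambda \in \R^m_+$ and linear functionals $f \in \Lambda(P)$.

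For the inclusion $\tilde{\mathcal A}_k(P) \subseteq \A_k(P)$, I would fix arbitrary $\lambda^1, \ldots, \lambda^k \in \R^m_+$ appearing in the definition of $\A_k(P)$ and produce a corresponding $k$-tuple that appears in the definition of $\tilde{\mathcal A}_k(P)$. The natural choice is $f^j := A^\transp \lambda^j$. Since $A$ and $\lambda^j$ are nonnegative, for every $d \in P$ one has $(f^j)^\transp d = (\lambda^j)^\transp A d \le (\lambda^j)^\transp b$, so the supremum $\beta^j := \sup\{(f^j)^\transp d \mid d \in P\}$ is finite, i.e., $f^j \in \Lambda(P)$, and moreover $\beta^j \le (\lambda^j)^\transp b$. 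Any $x \in \N^n$ satisfying $(f^j)^\transp x \le \beta^j$ for all $j$ therefore also satisfies $(\lambda^j)^\transp A x \le (\lambda^j)^\transp b$, so after taking convex hulls the corresponding factor in $\tilde{\mathcal A}_k(P)$ is contained in the corresponding factor of $\A_k(P)$, and the inclusion between intersections follows.

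For the reverse inclusion, I would fix arbitrary $f^1, \ldots, f^k \in \Lambda(P)$. The key step is LP duality applied to $\sup\{(f^j)^\transp d \mid d \in P\}$: since this value is finite and $P$ is nonempty, strong duality yields $\lambda^j \in \R^m_+$ with $A^\transp \lambda^j \ge f^j$ and $(\lambda^j)^\transp b = \beta^j$. For any $x \in \R^n_+$ that satisfies $(\lambda^j)^\transp A x \le (\lambda^j)^\transp b$, nonnegativity of $x$ combined with $A^\transp \lambda^j \ge f^j$ gives $(f^j)^\transp x \le (A^\transp \lambda^j)^\transp x = (\lambda^j)^\transp A x \le (\lambda^j)^\transp b = \beta^j$. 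Hence the corresponding integer set inside $\A_k(P)$ is contained in the corresponding integer set inside $\tilde{\mathcal A}_k(P)$, and again taking convex hulls and intersecting over all choices finishes the inclusion.

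There is no significant obstacle; the only subtle point is ensuring that the supremum $\beta^j$ is attained on the dual side so that one can actually produce the matching $\lambda^j$ with $(\lambda^j)^\transp b = \beta^j$, which is precisely what strong LP duality for a finite LP over the packing polyhedron $P$ (nonempty and bounded in the direction of $f^j$) delivers.
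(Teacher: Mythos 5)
Your proof is correct and follows essentially the same route as the paper: the easy direction maps $\lambda^j$ to $f^j = A^\transp\lambda^j$ exactly as the paper does, and your use of strong LP duality to produce $\lambda^j \in \R^m_+$ with $A^\transp\lambda^j \ge f^j$ and $(\lambda^j)^\transp b = \beta^j$ is just the paper's Farkas'-lemma step (which yields $\lambda^\transp A - \gamma^\transp = f^\transp$ with $\gamma \ge 0$ and $\lambda^\transp b \le \beta_f$) in equivalent form.
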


\begin{proof}
Let $P = \{x \in \R^n_+ \mid Ax \leq b, \ x \geq 0\}$ be a packing polyhedron in $\R^n$.
It is simple to show that $\tA_k (P) \subseteq \A_k (P)$.
To see this, consider an inequality $\lambda^\transp A x \leq \lambda^\transp b$, for $\lambda \in \R^m_+$, in the definition of $\A_k (P)$.
Then $\lambda^\transp A x \leq \lambda^\transp b$ is valid for $P$.
Thus $\lambda^\transp A \in \Lambda(D)$, and $\sup\{\lambda^\transp A d \mid d \in P\} \leq \lambda^\transp b$.
Hence, the inequality $\lambda^\transp A x \leq \sup\{\lambda^\transp A d \mid d \in P\}$ in the definition of $\tA_k (P)$ implies the original inequality $\lambda^\transp A x \leq \lambda^\transp b$.

Next, we show $\tA_k (P) \supseteq \A_k (P)$.
Consider an inequality $f^\transp x \leq \sup\{f^\transp d \mid d \in P\}$, for $f \in \Lambda(P)$, in the definition of $\tA_k (P)$. This inequality is valid for $P$.
From Farkas' lemma we know that there exist some $\lambda \in \R^m_+$ and $\gamma \in \R^n_+$ such that $\lambda^\transp A - \gamma^\transp I = f^\transp$ and $\lambda^\transp b \leq \sup\{f^\transp d \mid d \in P\}$. 
Note that the inequality $\lambda^\transp A x \leq \lambda^\transp b$ is valid for $P$.
Furthermore, it dominates the inequality $f^\transp x \leq \sup\{f^\transp d \mid d \in P\}$ in the nonnegative orthant.
This is because,  whenever $x \geq 0$,
$$
f^\transp x = \lambda^\transp A x - \gamma^\transp I x \leq \lambda^\transp A x \leq \lambda^\transp b \leq \sup\{f^\transp d \mid d \in P\}.
$$
We have shown $\tA_k (P) \supseteq \A_k (P)$, which completes the proof of the observation.
\end{proof}

We now state our  generalization of Theorem~\ref{theo: k-aggregation} to downsets of $\R^n_+$.

\begin{theorem}
\label{theo: k-aggregation-general}
For any downset $D$ of $\R^n_+$ and any $k\geq 1$, the set $\tA_k(D)$ is a packing polyhedron.
\end{theorem}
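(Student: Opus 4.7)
The plan is to adapt the strategy of Theorem~\ref{theo: k-aggregation} to general downsets, with an additional reduction step to handle mixed-sign functionals $f\in\Lambda(D)$ that can appear here but not in the polyhedral case. The first thing I would do is show that in the intersection defining $\tA_k(D)$ one may restrict to tuples of non-negative $f^j$. Given $f\in\Lambda(D)$, set $f^+:=\max(f,0)$ componentwise. Using the downset property of $D$, for any $d\in D$ the vector $\tilde d$ obtained by zeroing the coordinates of $d$ where $f<0$ is still in $D$ and satisfies $f^\transp\tilde d=(f^+)^\transp d\ge f^\transp d$. Hence $\sup_{d\in D} f^\transp d=\sup_{d\in D}(f^+)^\transp d$, so $f^+\in\Lambda(D)$ and the right-hand side $\beta=\sup_D f^\transp d$ is unchanged. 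Since $(f^+)^\transp x\ge f^\transp x$ for $x\ge 0$, replacing each $f^j$ by $(f^j)^+$ only shrinks $S(f^1,\ldots,f^k):=\{x\in\N^n: (f^j)^\transp x\le\beta_j,\ \forall j\}$, which yields
$$
\tA_k(D)=\bigcap_{\substack{f^1,\ldots,f^k\in\Lambda(D)\\ f^j\ge 0}}\conv\bigl(S(f^1,\ldots,f^k)\bigr).
$$

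With this reduction in hand, every $S(f^1,\ldots,f^k)$ appearing in the intersection is manifestly an integer packing set, so $\tA_k(D)$ has become an intersection of convex hulls of a family $\mathcal F$ of integer packing sets. My next step would be to apply Theorem~\ref{theo: packingset wqo}: well-quasi-ordering of integer packing sets by inclusion implies that $\mathcal F$ has only finitely many minimal elements $Q_1,\ldots,Q_r$. Since $\conv$ is inclusion-monotone, each $\conv(S)$ with $S\in\mathcal F$ contains some $\conv(Q_i)$, so I would conclude
$$
\tA_k(D)=\bigcap_{i=1}^r \conv(Q_i).
$$

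The final task would be to argue that each $\conv(Q_i)$ is itself a packing polyhedron; the theorem then follows at once, because the intersection of finitely many packing polyhedra is a packing polyhedron. This last step is the main obstacle, since an integer packing set $Q_i$ may have infinitely many maximal elements in $\N^n$ and so its polyhedrality is not immediate from Meyer's or Minkowski's theorem. The key tool I would use is Dickson's lemma: $\N^n\setminus Q_i$ has only finitely many minimal elements, giving $Q_i$ a finite combinatorial description. From this, inducting on $n$, one can express $Q_i$ as a finite union of ``generalized boxes'' of the form $\prod_j\bigl([0,a_j]\cap\N\bigr)$ with $a_j\in\N\cup\{\infty\}$; the convex hull of such a finite union is a polyhedron, and it is plainly a downset of $\R^n_+$, i.e., a packing polyhedron. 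This auxiliary lemma is presumably established separately in the paper to support both Theorem~\ref{theo: k-aggregation} and the present theorem; modulo it, the argument above assembles routinely.
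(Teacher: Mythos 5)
Your first two steps coincide with the paper's proof: the reduction to non-negative functionals via the downset property is exactly the paper's Lemma~\ref{lem: domination}, and the passage from an infinite to a finite intersection via Theorem~\ref{theo: packingset wqo} and the finite basis property (your ``finitely many minimal elements'' is this property in disguise) is the same argument used for Theorem~\ref{theo: k-aggregation}. The gap is in your final step. The auxiliary lemma you propose --- that the convex hull of a finite union of generalized boxes is a polyhedron --- is false. Take $n=2$ and $Q=(\N\times\{0\})\cup(\{0\}\times\{0,1\})$, a union of two blocks and a legitimate integer packing set. Every convex combination of points of $Q$ has the form $(\lambda m,\mu)$ with $m\in\N$, $\lambda,\mu\geq 0$, $\lambda+\mu\leq 1$, so $\conv(Q)=(\R_+\times[0,1))\cup\{(0,1)\}$. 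This set is not closed (it contains points $(x,y)$ with $y$ arbitrarily close to $1$ for every $x$, but not $(1,1)$), hence it is not a polyhedron. So closedness of the convex hull, which is the entire difficulty here, cannot be obtained from the block decomposition; that corollary appears in the paper only as a structural by-product of Theorem~\ref{theo: packingset wqo} and is not used in any polyhedrality proof.

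What saves the argument --- and what the paper actually does --- is that the sets surviving the finite-basis reduction are not arbitrary integer packing sets: each is the set of integer points of a polyhedron $\{x\geq 0,\ (f^j)^\transp x\leq \beta_{f^j},\ j\in[k]\}$ cut out by \emph{finitely many} non-negative inequalities. The paper's Lemma~\ref{lem: integerhull} shows that the integer hull of such a packing polyhedron is again a packing polyhedron, by perturbing each (possibly irrational) inequality to a rational one with the same set of non-negative integer solutions and then invoking Meyer's theorem; Lemma~\ref{lem: downset-in-R+} then gives the packing form. Your proof becomes correct once you replace your block-decomposition lemma with this integer-hull lemma, applied to the finitely many polyhedra selected by the finite basis property; note also that the rationalization step is needed because the $f^j\in\Lambda^+(D)$ may be irrational, a point your sketch does not address.
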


In the special case $k=1$, the $k$-aggregation closure is also known as the \emph{aggregation closure}.
In the recent unpublished manuscript \cite{PasPoiPul19}, the authors independently show that the aggregation closure of a packing or covering rational polyhedron $P$ is polyhedral.
The main differences with our Theorem~\ref{theo: k-aggregation-general} are the following:
(i) The result in \cite{PasPoiPul19} holds for both packing and covering polyhedra, while our Theorem~\ref{theo: k-aggregation} only deals with the packing case;
(ii) The result in \cite{PasPoiPul19} requires the given set to be a polyhedron, while in our case the given set can be a general downset of $\R^n_+$;
(iii) The proof in \cite{PasPoiPul19} is direct, while our Theorem~\ref{theo: k-aggregation} is a consequence of Theorem~\ref{theo: packingset wqo};
(iv) In \cite{PasPoiPul19} the authors only discuss in detail the aggregation closure, and claim that an analogous proofs can be obtained for the $k$-aggregation closure, while in this paper we directly consider the $k$-aggregation closure.

\medskip

We can further extend the definition of $\tA_k$ to $k = \infty$ in a natural way:
$$
\tA_\infty (D): = \conv(\{x \in \N^n \mid f^\transp x \leq \sup\{f^\transp d \mid d \in D\}, \ \forall f \in \Lambda(D)  \}).
$$
We obtain the following result.

\begin{theorem}
\label{theo: LAST}
For any downset $D$ of $\R^n_+$, the set $\tA_\infty(D)$ is a packing polyhedron.
\end{theorem}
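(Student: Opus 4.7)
The plan is to reduce Theorem~\ref{theo: LAST} to Theorem~\ref{theo: k-aggregation-general} by exhibiting a finite integer $K$ for which $\tA_\infty(D) = \tA_K(D)$; the existence of such a $K$ will be extracted from the wqo of integer packing sets.

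Let $T := \{x \in \N^n \mid f^\transp x \leq \sup\{f^\transp d \mid d \in D\}, \ \forall f \in \Lambda(D)\}$, so that by definition $\tA_\infty(D) = \conv(T)$. A preliminary observation is that, since $D$ is a downset of $\R^n_+$, for every $f \in \Lambda(D)$ one has $\sup\{f^\transp d \mid d \in D\} = \sup\{(f^+)^\transp d \mid d \in D\}$, where $f^+$ denotes the componentwise positive part; moreover, for any $x \in \R^n_+$, the inequality associated to $f^+$ implies that associated to $f$. Consequently, $T$ is unchanged when $f$ is restricted to $\Lambda(D) \cap \R^n_+$, and it follows immediately that $T$ is downward closed in $\N^n$, i.e., an integer packing set.

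The main step is to argue that $T$ admits a description by \emph{finitely many} inequalities of the above type: there exist $f^1, \dots, f^K \in \Lambda(D) \cap \R^n_+$ with
\[
T = \{x \in \N^n \mid (f^j)^\transp x \leq \sup\{(f^j)^\transp d \mid d \in D\}, \ \forall j \in [K]\}.
\]
For any finite $F \subseteq \Lambda(D) \cap \R^n_+$, write $T_F$ for the analogously defined integer packing set. If no such $F$ yields $T_F = T$, one can construct an infinite chain $F_0 \subsetneq F_1 \subsetneq \dots$ by iteratively picking a point $x \in T_{F_i} \setminus T$ (nonempty by assumption) and adjoining to $F_i$ a nonnegative member of $\Lambda(D)$ that separates $x$ from $T$. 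The resulting $T_{F_0} \supsetneq T_{F_1} \supsetneq \dots$ is a strictly decreasing sequence of integer packing sets, contradicting Theorem~\ref{theo: packingset wqo}. This is the crux of the plan, and the step in which the wqo theorem is indispensable.

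Once $K$ is secured, the conclusion is quick. On one hand, selecting the specific tuple $(f^1,\dots,f^K)$ as one member of the intersection defining $\tA_K(D)$ gives $\tA_K(D) \subseteq \conv(T) = \tA_\infty(D)$. On the other hand, $\tA_\infty(D) \subseteq \tA_K(D)$ is immediate because $T$ is contained in every integer set appearing in that intersection. Hence $\tA_\infty(D) = \tA_K(D)$, and the latter is a packing polyhedron by Theorem~\ref{theo: k-aggregation-general}.
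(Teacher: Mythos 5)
Your proof is correct, and it reaches the same destination as the paper's by a slightly different route. Both arguments hinge on using Theorem~\ref{theo: packingset wqo} to replace the infinite family of inequalities indexed by $\Lambda(D)\cap\R^n_+$ with a finite subfamily, but the mechanisms differ. The paper invokes the finite basis property (Lemma~\ref{lem: wqo_eq1}(ii)): there is a finite $B\subseteq\Lambda(D)\cap\R^n_+$ such that every $S_f$ contains some $S_{f'}$ with $f'\in B$, whence $\bigcap_f S_f=\bigcap_{f'\in B}S_{f'}$ in one step. You instead use the weaker consequence that a wqo admits no infinite strictly decreasing chain, and build such a chain $T_{F_0}\supsetneq T_{F_1}\supsetneq\cdots$ greedily from a hypothetical failure of finiteness; this is a well-foundedness argument rather than a finite-basis argument, and it works because $\subseteq$ is antisymmetric, so a strictly decreasing sequence can have no pair $i<j$ with $T_{F_i}\subseteq T_{F_j}$. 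The second difference is in the finish: having found $F=\{f^1,\dots,f^K\}$ with $T=T_F$, the paper applies Lemma~\ref{lem: integerhull} directly to conclude that $\conv(T_F)$, being the integer hull of a packing polyhedron, is a packing polyhedron, whereas you prove the sandwich $\tA_K(D)\subseteq\conv(T_F)=\tA_\infty(D)\subseteq\tA_K(D)$ and cite Theorem~\ref{theo: k-aggregation-general}. That reduction is clean and correct; it buys you a shorter write-up at the cost of routing through the same two ingredients (the wqo theorem and Lemma~\ref{lem: integerhull}) hidden inside the proof of Theorem~\ref{theo: k-aggregation-general}, so the underlying mathematics is ultimately the same.
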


Our work sheds light onto the connection between Order Theory and polyhedrality of closures in Integer Optimization. Only few papers so far have explored this connection.
In \cite{averkov2012finitely}, Averkov exploits the Gordan-Dickson lemma to show the polyhedrality of the closure of a rational polyhedron obtained via disjunctive cuts from a family of lattice-free rational polyhedra with bounded max-facet-width. In the paper \cite{dash2017polyhedrality}, Dash et al.~consider fairly well-ordered qoset to extend the result of Averkov.
In particular, the authors prove the polyhedrality of the closure of a rational polyhedron with respect to any family of $t$-branch sets, where each set is the union of $t$ polyhedral sets that have bounded max-facet-width. 
Other recent polyhedrality results in Integer Optimization include \cite{AndLouWei10,dadush2011chvatal,dunkel2013gomory,dash2016polyhedrality,dash2019generalized}.

The organization of this paper is as follows: 
In Section~\ref{sec: preliminaries} we present some preliminaries and notations from Order Theory that will be used in our proofs.
In Section~\ref{sec: main proof} we show Theorem~\ref{theo: packingset wqo}, while in Section~\ref{sec: polyhedrality} we provide a proof of Theorem~\ref{theo: k-aggregation}.
In Section~\ref{sec extensions} we turn our attention to non-polyhedral sets and prove Theorem~\ref{theo: k-aggregation-general} and Theorem~\ref{theo: LAST}.

\section{Preliminaries in Order Theory}
\label{sec: preliminaries}

Recall that a \emph{quasi-order} is a binary relation $\order$ over a set $X$ that is reflexive and transitive. If $a\order b$, we also write $b\georder a$. If $a\order b$ or $b\order a$, the elements $a$ and $b$ are said to be \emph{comparable}. If both $a\order b$ and $b\order a$, then we write $a\sim b$ (which is an equivalence relation). A sequence $x_1,x_2,\ldots$ of elements from $X$ is said to be \emph{increasing} if $x_1\order x_2\order\dots$ and \emph{decreasing} if $x_1\georder x_2\georder \dots$.  

Most quasi-orders in this paper will in fact be \emph{partial orders}, that is, they are \emph{antisymmetric}: $a\order b$ and $b\order a$ imply $a=b$. In particular, we will consider the subset relation on $\R^n$ (and the induced partial order on integer packing sets), and the partial order on $(\N^n,\leq)$ given by the component-wise comparison: $x\leq y$ if and only if $x_i\leq y_i$ for all $i\in[n]$.

A quasi-order $(X,\order)$ is a \emph{well-quasi-order} (\emph{wqo}) if for any infinite sequence of elements $x_1, x_2, \ldots$ from $X$ there are indices $i<j$ such that $x_i \order x_j$. A quasi-order $(X, \order)$ is said to have the \emph{finite basis property} if for all $X' \subseteq X$, there exists a finite subset $B \subseteq X'$ such that for every $x\in X'$ there is a $b\in B$ such that $b\order x$. The next result provides us with characterizations of well-quasi-orders. 
\begin{lemma}[{\cite[Theorem 2.1]{higman1952ordering}}]\label{lem: wqo_eq1}
Let $(X, \order)$ be a quasi-order. The following statements are equivalent:
\begin{enumerate}
\item[\textup{(i)}] $(X, \order)$ is a wqo;
\item[\textup{(ii)}] $(X, \order)$ has the finite basis property;
\item[\textup{(iii)}] every infinite sequence of elements from $X$ has an infinite increasing subsequence.
\end{enumerate}
\end{lemma}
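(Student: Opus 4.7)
The plan is to close the cycle of equivalences by establishing (iii) $\Rightarrow$ (i), (i) $\Rightarrow$ (iii), and the two-way equivalence (i) $\Leftrightarrow$ (ii). Of these, (iii) $\Rightarrow$ (i) is immediate: an infinite increasing subsequence $x_{i_1} \order x_{i_2} \order \cdots$ of a given infinite sequence $x_1, x_2, \ldots$ already witnesses indices $i_1 < i_2$ with $x_{i_1} \order x_{i_2}$.

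For (i) $\Rightarrow$ (iii), I would argue via a ``bad index'' reduction. Given a sequence $x_1, x_2, \ldots$, call an index $i$ \emph{bad} if $x_i \not\order x_j$ for every $j > i$. If there were infinitely many bad indices $i_1 < i_2 < \cdots$, the subsequence $x_{i_1}, x_{i_2}, \ldots$ would form an infinite antichain, contradicting (i). Hence the set of bad indices is finite, and for any $i_1$ past the last bad index one can greedily build an infinite chain $i_1 < i_2 < \cdots$ with $x_{i_1} \order x_{i_2} \order \cdots$ by repeatedly invoking that the current index is not bad.

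For (i) $\Rightarrow$ (ii), I would proceed by contrapositive: if some $X' \subseteq X$ admits no finite basis, then a greedy construction produces $x_1, x_2, \ldots \in X'$ with $x_i \not\order x_j$ for all $i < j$, because at each stage $\{x_1, \ldots, x_k\}$ fails to be a basis of $X'$ and hence some $x_{k+1} \in X'$ has no $x_i$ with $i \leq k$ below it; such a sequence contradicts (i). Conversely, for (ii) $\Rightarrow$ (i), given any infinite sequence $x_1, x_2, \ldots$, I would take a finite basis $B$ of $X' = \{x_i \mid i \in \N\}$ and let $N$ be the largest index at which some element of $B$ first appears. Then any $j > N$ satisfies $b \order x_j$ for some $b = x_i \in B$ with $i \leq N < j$, providing the required pair.

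I anticipate the main conceptual step to be the bad-index argument in (i) $\Rightarrow$ (iii), since it requires extracting an infinite chain rather than merely a single comparable pair; the remaining implications reduce to greedy constructions and a pigeonhole-style observation about where the finitely many basis elements first appear.
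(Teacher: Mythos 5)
Your proof is correct and complete: the four implications (iii)$\Rightarrow$(i), (i)$\Rightarrow$(iii), (i)$\Rightarrow$(ii), (ii)$\Rightarrow$(i) close the cycle, and each argument works as written. The paper itself gives no proof of this lemma (it is cited to Higman), so there is nothing to compare against; your bad-index argument for (i)$\Rightarrow$(iii) and the greedy/contrapositive arguments for the other implications are the standard ones. One small terminological point: the subsequence indexed by infinitely many bad indices is not literally an antichain (you only know $x_{i_k}\not\order x_{i_l}$ for $k<l$, not the reverse non-relation), but that is exactly what is needed to contradict the definition of wqo, so the argument stands.
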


Given two quasi-orders $(X_1,\order_1)$ and $(X_2,\order_2)$, the \emph{product quasi-order} is $(X_1\times X_2, \order)$ where $(x_1,x_2)\order (y_1,y_2)$ if and only if $x_1\order_1 y_1$ and $x_2\order_2 y_2$.
\begin{lemma}
Let $(X_1,\order_1)$ and $(X_2,\order_2)$ be wqo's. Then the product quasi-order is a wqo.
\end{lemma}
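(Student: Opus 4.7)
My plan is to verify the well-quasi-order property for the product by checking condition (iii) of Lemma~\ref{lem: wqo_eq1}, namely that every infinite sequence has an infinite increasing subsequence. Starting from an arbitrary infinite sequence $(x_1^{(1)}, x_1^{(2)}), (x_2^{(1)}, x_2^{(2)}), \ldots$ of elements of $X_1\times X_2$, I would first apply condition (iii) of Lemma~\ref{lem: wqo_eq1} to the projected sequence $x_1^{(1)}, x_2^{(1)}, \ldots$ in $X_1$, which is a wqo. This yields an infinite set of indices $i_1<i_2<\cdots$ such that $x_{i_1}^{(1)}\order_1 x_{i_2}^{(1)}\order_1 \cdots$.

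Next, I would apply condition (iii) of Lemma~\ref{lem: wqo_eq1} again, this time to the sequence $x_{i_1}^{(2)}, x_{i_2}^{(2)}, \ldots$ in $X_2$, which is also a wqo. This yields a further infinite subsequence of indices $j_1<j_2<\cdots$ (each $j_\ell$ being some $i_{k_\ell}$) such that $x_{j_1}^{(2)}\order_2 x_{j_2}^{(2)}\order_2 \cdots$. Since increasing subsequences of increasing sequences are increasing, we still have $x_{j_1}^{(1)}\order_1 x_{j_2}^{(1)}\order_1 \cdots$. Therefore, by definition of the product quasi-order, $(x_{j_1}^{(1)}, x_{j_1}^{(2)}) \order (x_{j_2}^{(1)}, x_{j_2}^{(2)}) \order \cdots$, which is the desired infinite increasing subsequence. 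Invoking Lemma~\ref{lem: wqo_eq1} in the opposite direction concludes that the product quasi-order is a wqo.

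I expect no substantial obstacle: the argument is a two-step diagonal extraction, and both steps are immediate consequences of the equivalence (i)$\Leftrightarrow$(iii) already granted by Lemma~\ref{lem: wqo_eq1}. The only minor subtlety worth stating explicitly is that restricting to a subsequence preserves the increasing property established in the previous step, so that after the second extraction both coordinates remain simultaneously increasing. One could alternatively phrase the proof via condition (ii) (finite basis property), but the increasing-subsequence formulation is cleaner and avoids having to manipulate bases of subsets of the product.
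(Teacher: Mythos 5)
Your proof is correct and follows essentially the same route as the paper: both extract an infinite subsequence increasing in the first coordinate via the equivalence (i)$\Leftrightarrow$(iii) of Lemma~\ref{lem: wqo_eq1}, then refine it to one increasing in the second coordinate as well. Nothing to add.
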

The proof of this well-known fact follows easily from the equivalence of (i) and (iii) in Lemma~\ref{lem: wqo_eq1}: given an infinite sequence of elements in $X_1\times X_2$, we can find an infinite subsequence for which the components in $X_1$ form an increasing sequence, and then a further subsequence in which also the components in $X_2$ form an increasing sequence. The resulting subsequence is an increasing subsequence in $X_1\times X_2$.  

Since $(\N,\leq)$ is a wqo, the lemma implies that for any positive $n$ the set $\N^n$ is a wqo under the usual component-wise comparison. 
\begin{lemma}[Gordan-Dickson, \cite{dickson1913finiteness}]
\label{gdl: a}
The poset $(\N^n, \leq)$ is a wqo.
\end{lemma}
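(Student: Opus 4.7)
The plan is to prove $(\N^n,\leq)$ is a wqo by a short induction on the dimension $n$, leveraging the product-of-wqo's lemma established immediately above. The base case $n=1$ asserts that $(\N,\leq)$ is a wqo, which I would verify directly: given any infinite sequence $x_1,x_2,\ldots$ in $\N$, suppose for contradiction that there were no indices $i<j$ with $x_i\leq x_j$. Then by totality of $\leq$ on $\N$ we would have $x_1>x_2>\cdots$, an infinite strictly decreasing sequence of nonnegative integers, which is impossible. Hence some $i<j$ satisfies $x_i\leq x_j$.

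For the inductive step, I would identify $\N^n$ with $\N^{n-1}\times\N$ via the canonical bijection $(x_1,\ldots,x_n)\mapsto((x_1,\ldots,x_{n-1}),x_n)$. Under this identification, the component-wise partial order on $\N^n$ coincides exactly with the product quasi-order of $(\N^{n-1},\leq)$ and $(\N,\leq)$: we have $(x_1,\ldots,x_n)\leq(y_1,\ldots,y_n)$ in $\N^n$ if and only if $(x_1,\ldots,x_{n-1})\leq(y_1,\ldots,y_{n-1})$ in $\N^{n-1}$ and $x_n\leq y_n$ in $\N$. By the inductive hypothesis $(\N^{n-1},\leq)$ is a wqo, and by the base case so is $(\N,\leq)$, so the preceding lemma on products of wqo's immediately gives that $(\N^n,\leq)$ is a wqo, closing the induction.

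There is essentially no obstacle here: the entire argument is a clean induction, and indeed the sentence preceding the lemma in the paper already foreshadows this reduction. The only mild care needed is to make explicit the identification between $\N^n$ and $\N^{n-1}\times\N$ and to check that the two partial orders agree under it — a one-line verification. As an alternative avoiding induction, one could argue directly in the spirit of the proof sketch of the product lemma: given an infinite sequence in $\N^n$, iteratively pass to an infinite subsequence that is non-decreasing in each successive coordinate, obtaining after $n$ extractions an infinite component-wise non-decreasing subsequence and hence, by Lemma~\ref{lem: wqo_eq1}(iii), the wqo property. The inductive route via the product lemma is shorter and is what I would write up.
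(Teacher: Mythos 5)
Your proposal is correct and matches the paper's approach exactly: the paper derives this lemma in the sentence immediately preceding it, as a direct consequence of the product-of-wqo's lemma applied to the base case that $(\N,\leq)$ is a wqo. Your write-up just makes the induction and the base-case verification (no infinite strictly decreasing sequence of nonnegative integers) explicit.
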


Given a quasi order $(X,\order)$ we denote by $X^*$ the set of all finite sequences of elements from $X$. We define a quasi order $\order^*$ on $X^*$ by setting $(x_1,\ldots, x_n)\order^*(y_1,\ldots, y_m)$ if and only if there is a strictly increasing function $f:[n]\to [m]$ such that $x_i\order y_{f(i)}$ for all $i\in [n]$. In this paper we will need the following generalization of the Gordon-Dickson lemma.
\begin{lemma}[Higman's lemma, \cite{higman1952ordering}]
Let $(X,\order)$ be a wqo. Then $(X^*,\order^*)$ is a wqo as well.
\end{lemma}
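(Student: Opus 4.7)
The plan is to prove Higman's lemma by the classical \emph{minimal bad sequence} argument of Nash-Williams. First I would suppose for contradiction that $(X^{*},\order^{*})$ is not a wqo; equivalently, there exists an infinite \emph{bad sequence} $s_{1},s_{2},\ldots$ in $X^{*}$, meaning that no pair of indices $i<j$ satisfies $s_{i}\order^{*}s_{j}$.

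Next I would build a \emph{minimal} bad sequence recursively: having already chosen $s_{1},\ldots,s_{n-1}$, I pick $s_{n}$ to have the minimum possible length among all $u\in X^{*}$ for which the partial sequence $s_{1},\ldots,s_{n-1},u$ can be extended to an infinite bad sequence. This construction uses dependent choice. Every $s_{i}$ is necessarily nonempty, since the empty word is $\order^{*}$ below every element of $X^{*}$ (using the empty embedding), so an empty entry would immediately break badness. I would then write $s_{i}=(a_{i})\cdot t_{i}$, where $a_{i}\in X$ is the first letter of $s_{i}$ and $t_{i}\in X^{*}$ is its tail.

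Since $(X,\order)$ is a wqo, Lemma~\ref{lem: wqo_eq1} supplies an infinite increasing subsequence $a_{i_{1}}\order a_{i_{2}}\order\cdots$ of $a_{1},a_{2},\ldots$. I would then consider the modified sequence
\[
s_{1},\,s_{2},\,\ldots,\,s_{i_{1}-1},\,t_{i_{1}},\,t_{i_{2}},\,t_{i_{3}},\,\ldots.
\]
Because $t_{i_{1}}$ is strictly shorter than $s_{i_{1}}$, the minimality of our original bad sequence forces this new sequence to be \emph{not} bad: there must exist positions $p<q$ whose entries $u_{p},u_{q}$ satisfy $u_{p}\order^{*}u_{q}$.

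Finally I would derive a contradiction by case analysis. If both $u_{p}$ and $u_{q}$ lie in the initial segment $s_{1},\ldots,s_{i_{1}-1}$, the original sequence was not bad after all. If $u_{p}=s_{p}$ with $p<i_{1}$ and $u_{q}=t_{i_{k}}$, then $s_{p}\order^{*}t_{i_{k}}\order^{*}s_{i_{k}}$, since the tail always embeds into the full word via $f(j)=j+1$; this again contradicts badness. If $u_{p}=t_{i_{k}}$ and $u_{q}=t_{i_{l}}$ with $k<l$, I combine the embedding witnessing $t_{i_{k}}\order^{*}t_{i_{l}}$ with the relation $a_{i_{k}}\order a_{i_{l}}$ by prepending a matching first coordinate, obtaining $s_{i_{k}}\order^{*}s_{i_{l}}$, a final contradiction. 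The step I expect to be the main obstacle is the construction of the minimal bad sequence itself, which requires dependent choice and a careful formulation of ``minimum length of a continuation to a bad sequence''; once the bookkeeping is in place, the case analysis is a direct verification using only the definition of $\order^{*}$.
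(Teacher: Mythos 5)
Your proposal is correct. It is worth noting that the paper does not prove Higman's lemma at all: it is stated as a black-box citation to Higman's 1952 paper, so there is no ``paper proof'' to compare against. What you have written is the standard Nash--Williams minimal-bad-sequence argument, and all the pieces are in order: the empty word is $\order^*$-below everything, so every term of a bad sequence is nonempty; minimality of the chosen sequence (each $s_n$ of least length among continuations of $s_1,\ldots,s_{n-1}$ to a bad sequence) forces the modified sequence $s_1,\ldots,s_{i_1-1},t_{i_1},t_{i_2},\ldots$ to be good because $t_{i_1}$ is strictly shorter than $s_{i_1}$; and your three-way case analysis correctly uses that $t_{i_k}\order^* s_{i_k}$ via the shift $j\mapsto j+1$ and that an embedding of tails extends to an embedding of the full words once $a_{i_k}\order a_{i_l}$ is prepended. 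The appeal to part (iii) of Lemma~\ref{lem: wqo_eq1} to extract the increasing subsequence of first letters is exactly the right tool from the paper's toolkit. The only caveats are the ones you already flag yourself: the construction of the minimal bad sequence uses dependent choice, and one must check at each stage that the set of admissible continuations is nonempty (it is, since by construction the prefix already extends to a bad sequence, and lengths are well-ordered). In short, you have supplied a complete, self-contained proof of a result the paper merely imports.
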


\section{Integer packing sets are well-quasi-ordered}
\label{sec: main proof}

In this section we prove our main result that integer packing sets in $\R^n$ form a wqo under inclusion. The proof is based on the following lemma.
\begin{lemma}\label{X**}
Let $(X,\order)$ be a wqo. Define $X^{**}$ to be the set of decreasing sequences in $X$: 
$$X^{**}=\{(x_0,x_1,\ldots)\in X^{\N}: x_0\georder x_1\georder\cdots\}.$$ 
For $x,y\in X^{**}$ set $x\order^{**}y$ if $x_i\order y_i$ for every $i\in \N$. Then $(X^{**}, \leq ^{**})$ is a wqo.
\end{lemma}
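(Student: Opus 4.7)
The plan is to encode each infinite decreasing sequence as a finite object (plus a limit) and then apply Higman's lemma. The key structural observation I would first establish is that every decreasing sequence in a wqo \emph{stabilizes modulo} $\sim$: applying Lemma~\ref{lem: wqo_eq1}(iii) to $x = (x_0,x_1,\ldots) \in X^{**}$ yields an infinite increasing subsequence $x_{j_0} \order x_{j_1} \order \cdots$, and combining this with $x_0 \georder x_1 \georder \cdots$ forces $x_i \sim x_{j_0}$ for every $i \geq j_0$. I would denote the eventual equivalence class by a representative $\ell(x) \in X$, let $N(x) \in \N$ be an onset of stabilization, and record the finite prefix $f(x) := (x_0, \ldots, x_{N(x)-1}) \in X^*$.

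Given an infinite sequence $x^{(1)}, x^{(2)}, \ldots$ in $X^{**}$, I would form the associated sequence $(f(x^{(n)}), \ell(x^{(n)}))_{n \geq 1}$ in $X^* \times X$. Higman's lemma says $(X^*, \order^*)$ is a wqo, and the product of two wqo's is a wqo, so there must exist indices $i < j$ with $f(x^{(i)}) \order^* f(x^{(j)})$ and $\ell(x^{(i)}) \order \ell(x^{(j)})$ simultaneously.

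The last step is to verify $x^{(i)}_k \order x^{(j)}_k$ for every $k \in \N$. For $k < N(x^{(i)})$, the strictly increasing embedding $\phi$ witnessing $f(x^{(i)}) \order^* f(x^{(j)})$ satisfies $\phi(k) \geq k$, so the decreasing property of $x^{(j)}$ gives $x^{(i)}_k \order x^{(j)}_{\phi(k)} \order x^{(j)}_k$. For $k \geq N(x^{(i)})$, I would chain $x^{(i)}_k \sim \ell(x^{(i)}) \order \ell(x^{(j)}) \order x^{(j)}_k$, where the last step holds because the decreasing property together with stabilization always yields $x^{(j)}_k \georder \ell(x^{(j)})$. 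I expect the main obstacle to be that Higman's embedding $\phi$ is a priori \emph{misaligned} with the rigid coordinatewise comparison $\order^{**}$; the decreasing structure is precisely what allows this misalignment to be absorbed, and is also why the conclusion would fail for arbitrary (non-decreasing) sequences in $X^\N$.
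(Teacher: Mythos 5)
Your proposal is correct and follows essentially the same route as the paper's proof: establish that every decreasing sequence stabilizes modulo $\sim$, encode each sequence as a (finite prefix, tail value) pair in $X^*\times X$, apply Higman's lemma together with the product-wqo fact, and then use the decreasing property plus $\phi(k)\geq k$ to realign the Higman embedding with the coordinatewise order. The only cosmetic difference is that you derive stabilization from characterization (iii) of Lemma~\ref{lem: wqo_eq1} rather than the finite basis property; both work.
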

\begin{proof}
We start with the following claim.
\begin{claim} Let $x\in X^{**}$. There is a $k\in \N$ such that $x_k\sim x_\ell$ for all $\ell\geq k$.
\end{claim}
\begin{cpf}
Since $X$ is a wqo, it follows by Lemma~\ref{lem: wqo_eq1} that there is a finite subset $I\subseteq \N$ of indices such that for any $\ell\in \N$ there is an $i\in I$ with $x_\ell\georder x_i$. Let $k$ be the largest index in $I$. Consider any $\ell\geq k$. Since $x_0,x_1,\ldots$ is decreasing, we have $x_\ell\order	x_k$, but also $x_\ell\georder x_i\georder x_k$ for some $i\in I$. Hence, $x_\ell\sim x_k$.
\end{cpf}
We call the smallest $k$ as in the claim the \emph{tail} of $x$. By Higman's lemma, it follows that the product quasi-order $\order'$ on $X^*\times X$ is a wqo.  Let $\phi:X^{**}\to X^*\times X$ be defined by $\phi(x)=((x_0,\ldots, x_{k-1}),x_k)$, where $k$ is the tail of $x$. Let $x,y\in X^{**}$ and suppose that $\phi(x)\order' \phi(y)$. To complete the proof, it suffices to show that $x\order^{**}y$. 

Let $k$ and $l$ be the tails of $x$ and $y$, respectively. Since $\phi(x)\order' \phi(y)$ we have a strictly increasing function $f:\{0,\ldots, k-1\}\to \{0,\ldots, \ell-1\}$ such that $x_i\order y_{f(i)}$ for every $i\in \{0,\ldots, k-1\}$. 

Since $y_0\georder y_1\georder\cdots$ and $f(i)\geq i$ for every $i\in \{0,\ldots, k-1\}$, we have $x_i\order y_{f(i)}\order y_i$ for every $i\in\{0,\ldots, k-1\}$. Since $x_k\order y_\ell$ (and hence $x_i\order x_k\order y_\ell\order y_j$ for every $i\geq k$ and every $j\in\N$) we in fact have $x_i\order y_i$ for all $i\in \N$.   
\end{proof}

We will now prove Theorem~\ref{theo: packingset wqo}: the set of integer packing sets in $\R^n$ is a wqo under inclusion.
\begin{proof}[Proof of Theorem~\ref{theo: packingset wqo}] The proof is by induction on $n$. The case $n=1$ follows directly from the fact that $(\N,\leq)$ is a wqo. For the induction step, we associate to any integer packing set $S\subseteq \R^{n+1}$ a sequence $(S_0,S_1,\ldots)$ of `slices' by setting 
$$S_i=\{(x_1,\ldots, x_n)\in \N^n : (x_1,\ldots, x_n, i)\in S\}.$$ 
As $S$ is an integer packing set, it follows that the $S_i$ are integer packing sets in $\R^n$ and that $S_0\supseteq  S_1\supseteq \cdots$. For two packing sets $S,T$ in $\R^{n+1}$ we have $S\subseteq T$ if and only if for the corresponding slices we have $S_i\subseteq T_i$ for all $i\in \N$. Hence, the well-quasi-ordering of integer packing sets in $\R^{n+1}$ follows from that of integer packing sets in $\R^n$ by Lemma~\ref{X**}.
\end{proof}

As a consequence to Theorem~\ref{theo: packingset wqo} we obtain the following structural result about integer packing sets.
An \emph{$n$-dimensional block} is a set of the form $X_1\times\cdots\times X_n$, where each $X_i$ is equal to $\N$ or to $[n]$ for some $n\in \N$. 
\begin{corollary} Let $Q$ be an integer packing set in $\R^n$. Then $Q$ is the union of finitely many $n$-dimensional blocks. 
\end{corollary}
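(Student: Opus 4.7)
The plan is to prove the corollary by applying the Gordan-Dickson lemma (Lemma~\ref{gdl: a}) to the complement of $Q$ in $\N^n$ and then expanding the resulting Boolean description in disjunctive normal form. The central observation is that any integer packing set is the complement of a \emph{finitely generated} upset in $\N^n$, and negating a finite union of principal filters unfolds naturally into a finite union of product sets.

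Concretely, I would set $U := \N^n \setminus Q$. Since $Q$ is a downset, $U$ is upward-closed, so Gordan-Dickson implies that its set $A = \{a^1,\ldots, a^k\}$ of minimal elements is a finite antichain, and $U = \bigcup_{j=1}^k \{x \in \N^n : x \ge a^j\}$. Taking complements gives
\[
Q \;=\; \bigcap_{j=1}^k \bigcup_{i=1}^n \{x \in \N^n : x_i < a^j_i\},
\]
and distributing the intersection over the unions (CNF to DNF) yields
\[
Q \;=\; \bigcup_{f\colon [k]\to [n]} \; \bigcap_{j=1}^k \{x \in \N^n : x_{f(j)} < a^j_{f(j)}\}.
\]
For each selector $f$, the inner intersection restricts each coordinate independently: $x_i < \min\{a^j_i : f(j) = i\}$ when $f^{-1}(i)$ is nonempty, and $x_i \in \N$ otherwise. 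Hence every inner intersection is an $n$-dimensional block, and since there are only $n^k$ functions $f$, the union is finite.

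The main issue is bookkeeping rather than real difficulty. One should interpret the prefix $[m]$ in the definition of a block as a prefix of $\N$ containing the origin (so that the decomposition can produce sets containing the zero vector), and verify the degenerate cases: $Q = \emptyset$ corresponds to $0 \in A$ and collapses to an empty union, while $Q = \N^n$ corresponds to $A = \emptyset$ and is itself the single block $\N \times \cdots \times \N$. With those remarks in place, the distributive expansion above is the whole proof.
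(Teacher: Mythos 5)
Your proof is correct, and it takes a genuinely different route from the paper's. The paper argues by induction on the dimension: it slices $Q$ along the last coordinate into a decreasing sequence $Q_0\supseteq Q_1\supseteq\cdots$ of integer packing sets one dimension down, invokes Theorem~\ref{theo: packingset wqo} to conclude that this sequence stabilizes at some index $k$, and then writes $Q=(Q_k\times\N)\cup\bigcup_{i=0}^{k-1}Q_i\times\{0,1,\ldots,i\}$, decomposing each slice by the induction hypothesis. You instead pass to the complement $U=\N^n\setminus Q$, which is an upset, use the Gordan--Dickson lemma (Lemma~\ref{gdl: a}) to write $U$ as a finite union of principal filters generated by its (finite antichain of) minimal elements, and then expand the resulting CNF description of $Q$ into a DNF whose clauses are exactly blocks. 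What each approach buys: yours is non-inductive, uses only the elementary Lemma~\ref{gdl: a} rather than the paper's main Theorem~\ref{theo: packingset wqo}, and gives an explicit bound of $n^{k}$ blocks in terms of the number $k$ of minimal elements of the complement; the paper's version is shorter to state given that the wqo machinery is already in place and showcases the slice-stabilization mechanism that drives the main theorem. Two of your side remarks deserve confirmation: reading $[m]$ in the block definition as a prefix of $\N$ containing $0$ is indeed the intended meaning (the paper's own proof uses factors $\{0,1,\ldots,i\}$), and your handling of the degenerate cases $Q=\emptyset$ and $Q=\N^n$ is fine, provided one either admits the empty set as a (degenerate) block or simply discards the empty clauses from the union.
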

\begin{proof}
The proof is by induction on $n$. If $n=1$, then any integer packing set in $\R^n$ is an $n$-dimensional block. Now suppose that the statement holds for a given $n$ and consider an integer packing set $Q$ in $\R^{n+1}$. Define the $n$-dimensional slices $Q_i=\{(x_1,\ldots, x_n): (x_1,\ldots, x_n,i)\in Q\}$ for every $i\in\N$. Then $Q_0, Q_1,\ldots$ is a decreasing sequence of integer packing sets in $\R^n$. Hence, by Theorem~\ref{theo: packingset wqo}, there is a $k\in \N$ such that $Q_k=Q_\ell$ for all $\ell\geq k$. By assumption, each set $Q_i$ is a union of finitely many $n$-dimensional blocks. Hence $Q_i\times \{0,1,\ldots, i\}$ is a union of finitely many $n+1$-dimensional block for any $i=0,1,\ldots, k-1$, and also $Q_k\times \N$ is a union of finitely many $n+1$-dimensional blocks. Since 
$$Q=(Q_k\times \N)\cup \bigcup_{i=0}^{k-1} Q_i\times \{0,1,\ldots, i\},$$
the result follows.  
\end{proof}

\section{Polyhedrality of the $k$-aggregation closure}
\label{sec: polyhedrality}

In this section we prove that the $k$-aggregation closure of a packing polyhedron is itself a packing polyhedron (Theorem~\ref{theo: k-aggregation}). We will use some standard notation from polyhedral theory. In particular, given $A\subseteq \R^n$, we denote by $\conv(A)$ the convex hull of $A$, and given a polyhedron $P\subseteq \R^n$, we denote by $P_I=\conv(P\cap \Z^n)$ the integer hull of $P$. Given $a\in \R^n$, we define $a_+\in\R^n$ by $(a_+)_i:=\max\{0,a_i\}$ for all $i\in[n]$. 

\begin{lemma}\label{lem: domination}
Let $D$ be a downset of $\R_+^n$ and let $a^\transp x\leq \beta$ be a valid inequality for $D$. Then $a_+^\transp x\leq \beta$ is valid for $D$.
\end{lemma}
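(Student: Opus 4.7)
The plan is to produce, for every point $x\in D$, a companion point $\tilde x\in D$ whose inner product with $a$ equals $a_+^\transp x$, and then invoke the given validity of $a^\transp x\le\beta$ at $\tilde x$.

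Concretely, I would fix an arbitrary $x\in D$ and define $\tilde x\in\R^n$ coordinate-wise by $\tilde x_i = x_i$ when $a_i>0$ and $\tilde x_i = 0$ when $a_i\le 0$. Since $x\in D\subseteq\R^n_+$ we have $0\le\tilde x\le x$, and in particular $\tilde x\in\R^n_+$. The downset property of $D$ then yields $\tilde x\in D$, so by the hypothesized validity $a^\transp\tilde x\le\beta$.

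The final step is the arithmetic identity $a^\transp\tilde x = a_+^\transp x$. Indeed, the coordinates of $a$ with $a_i\le 0$ contribute $0$ to both sides (on the left because $\tilde x_i=0$, on the right because $(a_+)_i=0$), while the coordinates with $a_i>0$ contribute $a_i x_i = (a_+)_i x_i$ to both sides. Combining, $a_+^\transp x = a^\transp\tilde x\le\beta$ for every $x\in D$, which is the desired validity.

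There is no real obstacle in this argument; the only point worth flagging is that the construction of $\tilde x$ uses both that $D$ is a subset of $\R^n_+$ (so truncating negative-index coordinates to $0$ yields a vector that is still componentwise below $x$ and lies in $\R^n_+$) and that $D$ is closed downward. If either hypothesis were dropped the statement could fail, so these two properties are exactly what the proof exploits.
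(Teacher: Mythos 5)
Your proof is correct and is essentially the same as the paper's: both construct the truncated point (the paper zeroes out coordinates with $a_i<0$, you zero out those with $a_i\le 0$, an immaterial difference since such coordinates contribute nothing), use the downset property to place it in $D$, and apply the given valid inequality there.
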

\begin{proof}
Let $x\in D$ and let $x'\in \R^n$ be defined by $x'_i=x_i$ if $a_i\geq 0$ and $x'_i=0$ if $a_i<0$. Since $D$ is a downset, we have $x'\in D$. Hence, $a_+^\transp x=a^\transp x'\leq \beta$.
\end{proof}

\begin{lemma}\label{lem: downset-in-R+}
A polyhedron is a downset of $\R^n_+$ if and only if it is a packing polyhedron.
\end{lemma}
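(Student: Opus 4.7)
The plan is to prove both implications, with the forward direction being essentially immediate from the definition and the reverse direction being the substantive part that uses Lemma~\ref{lem: domination}.

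For the ``packing polyhedron implies downset'' direction, I would take $P=\{x\in\R^n\mid Ax\leq b,\ x\geq 0\}$ with $A\geq 0$ and $b\geq 0$. If $x\in P$ and $y\in\R^n_+$ satisfies $y\leq x$, then $y\geq 0$ and, using $A\geq 0$, we get $Ay\leq Ax\leq b$, so $y\in P$. Thus $P$ is a downset of $\R^n_+$.

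For the reverse direction, suppose $P$ is a polyhedron that is a downset of $\R^n_+$. Since $P\subseteq\R^n_+$, we may write $P=\{x\in\R^n_+\mid a_i^\transp x\leq \beta_i,\ i\in[m]\}$ for some inequalities $a_i^\transp x\leq\beta_i$ (by adjoining the valid inequalities $x_j\geq 0$ to any given $H$-representation). Each inequality $a_i^\transp x\leq \beta_i$ is valid for $P$ (which is a downset of $\R^n_+$), so by Lemma~\ref{lem: domination} the inequality $(a_i)_+^\transp x\leq \beta_i$ is valid for $P$ as well. Setting $P':=\{x\in\R^n_+\mid (a_i)_+^\transp x\leq \beta_i,\ i\in[m]\}$, the above validity gives $P\subseteq P'$. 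Conversely, for any $x\in P'$ we have $x\geq 0$ and $a_i^\transp x\leq (a_i)_+^\transp x\leq \beta_i$, so $x\in P$; hence $P=P'$. The matrix with rows $(a_i)_+$ is nonnegative. Finally, to conclude that $P$ is a packing polyhedron it remains to verify $\beta_i\geq 0$ for each $i$. If $P$ is nonempty, then since it is a downset of $\R^n_+$ it contains $0$, and substituting $x=0$ into $(a_i)_+^\transp x\leq \beta_i$ yields $\beta_i\geq 0$, completing the argument.

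The only subtlety I anticipate is the handling of the empty polyhedron, which is vacuously a downset of $\R^n_+$ but cannot be written with $A,b\geq 0$ under the stated definition (substituting $x=0$ would place $0$ in such a set). I expect this edge case is either excluded implicitly or handled by an explicit convention; otherwise the main content above is the proof.
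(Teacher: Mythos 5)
Your proof is correct and follows essentially the same route as the paper: the forward direction is immediate from nonnegativity of $A$, and the reverse direction rewrites each inequality via Lemma~\ref{lem: domination} and uses $0\in P$ to get $\beta_i\geq 0$. The empty-polyhedron edge case you flag is likewise glossed over in the paper (which simply asserts $0\in P$), so your treatment is, if anything, slightly more careful.
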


\begin{proof}
It is simple to see that every packing polyhedron is a downset of $\R^n_+$. 
For the converse implication, let $P$ be a polyhedron that is a downset of $\R^n_+$. Then $P$ can be written in the form 
$$P=\{x\in \R^n\mid x\geq 0,\quad  (a^i)^\transp x\leq b_i,\quad i\in [m]\}.$$
Consider any inequality $(a^i)^\transp x\leq b_i$. Since $P$ is a downset, it follows from Lemma~\ref{lem: domination} that $(a^i_+)^\transp x\leq b_i$ is valid for $P$. Moreover, the inequality $(a^i)^\transp x\leq b_i$ is implied by the inequalities $(a^i_+)^\transp x\leq b_i$ and $x\geq 0$. 

It follows that 
$$P=\{x\in \R^n\mid x\geq 0,\quad (a^i_+)^\transp x\leq b_i,\quad i\in [m]\}.$$
Since $0\in P$, it follows that $b_i\geq 0$ for every $i\in[m]$. We conclude that $P$ is a packing polyhedron.
\end{proof}

\begin{lemma}\label{lem: integerhull} Let $P$ be a packing polyhedron. Then the integer hull $P_I$ is also a packing polyhedron.
\end{lemma}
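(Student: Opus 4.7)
The plan is to show that $P_I$ is both a polyhedron and a downset of $\R^n_+$, after which Lemma~\ref{lem: downset-in-R+} immediately yields that $P_I$ is a packing polyhedron. I expect polyhedrality to be the main obstacle: $P$ is not assumed to be rational, so Meyer's theorem does not directly apply, and I will have to exploit the packing structure explicitly. The downset property, in contrast, should come from a short convex-combination argument.

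For polyhedrality, I would write $P = \{x \in \R^n_+ : Ax \le b\}$ with $A,b \ge 0$ and partition $[n] = I \cup J$, where $I$ indexes the all-zero columns of $A$. For every $j \in J$ some entry $A_{ij} > 0$ forces $x_j \le b_i/A_{ij}$ on $P$, whereas the coordinates in $I$ are completely unconstrained. Consequently $P$ factors as a direct product $P = P_J \times \R^I_+$, where $P_J := \{x_J \in \R^J_+ : A_J x_J \le b\}$ is a bounded packing polyhedron. It then follows that $P \cap \Z^n = (P_J \cap \Z^J) \times \N^I$, and since the convex hull of a product equals the product of the convex hulls, one gets $P_I = \conv(P_J \cap \Z^J) \times \R^I_+$. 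Boundedness of $P_J$ makes $P_J \cap \Z^J$ finite, so $\conv(P_J \cap \Z^J)$ is a polytope, and $P_I$ is the Minkowski sum of a polytope with a polyhedral cone, hence a polyhedron.

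To establish the downset property, I would reduce to showing that for every $y \in P_I$ and every coordinate $j \in [n]$ the point $y - y_j e_j$ still belongs to $P_I$. Indeed, once this holds, the convex combination $(1 - t/y_j)(y - y_j e_j) + (t/y_j)\, y$ realises for each $t \in [0, y_j]$ the vector obtained from $y$ by replacing its $j$-th entry with $t$; iterating over the coordinates produces every $z$ with $0 \le z \le y$. To verify the key claim, write $y = \sum_k \lambda_k v^k$ as a convex combination of integer points $v^k \in P \cap \Z^n$. Since $P$ is itself a downset of $\R^n_+$ (by Lemma~\ref{lem: downset-in-R+}), each vector $v^k - v^k_j e_j$, being a nonnegative integer vector dominated by $v^k$, still lies in $P \cap \Z^n$. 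Hence $y - y_j e_j = \sum_k \lambda_k (v^k - v^k_j e_j) \in P_I$, as required. Combining the two parts, Lemma~\ref{lem: downset-in-R+} finishes the proof.
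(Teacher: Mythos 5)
Your proof is correct, but it reaches polyhedrality by a genuinely different route than the paper. The paper keeps all coordinates together: for each (possibly irrational) constraint $(a^i)^\transp x\le b_i$ it notes that, since $a^i\ge 0$, only finitely many values of $(a^i)^\transp x$ with $x\in\N^n$ lie in $(b_i,b_i+1)$, so the constraint can be replaced by a nonnegative \emph{rational} one cutting off exactly the same integer points; this produces a rational polyhedron $P'$ with $P\cap\N^n=P'\cap\N^n$, and Meyer's theorem then gives that $P_I=P'_I$ is a polyhedron. You instead split the variables into those receiving a positive coefficient in some row (hence uniformly bounded on $P$) and those that are entirely unconstrained, factor $P$ as a bounded packing polytope times $\R^I_+$, and conclude that $P_I$ is the product of the convex hull of a finite set with $\R^I_+$, hence a polyhedron. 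Both arguments correctly confront the fact that $P$ need not be rational (so Meyer's theorem cannot be applied to $P$ directly); yours avoids Meyer's theorem altogether and is self-contained, at the cost of a slightly longer structural argument, while the paper's rational-perturbation trick is shorter given Meyer as a black box. A further difference is that you give an explicit convex-combination proof that $P_I$ is a downset of $\R^n_+$ (needed to invoke Lemma~\ref{lem: downset-in-R+}), a step the paper simply asserts as clear; your argument for it is valid, including the harmless degenerate case $y_j=0$.
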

\begin{proof}
We can write $P=\{x\in \R^n\mid x\geq 0,\ (a^i)^\transp x\leq b_i,\ i\in [m]\}$ where the $a^i$ and $b_i$ are nonnegative. Consider any of the inequalities $(a^i)^\transp x\leq b_i$. Note that since $a^i$ is nonnegative, the set $\{(a^i)^\transp x\mid x\in \N^n\}\cap (b_i, b_i+1)$ is finite. Hence, there exist nonnegative rational $c^i\leq a^i$ and $d_i\geq b_i$ such that 
$$
\{x\in \N^n\mid (a^i)^\transp x\leq b_i\}=\{x\in \N^n\mid (c^i)^\transp x\leq d_i\}.
$$ 
Let $P'=\{x\in \R^n\mid x\geq 0,\ (c^i)^\transp x\leq d_i,\ i\in [m]\}$. Then $P\cap \N^n=P'\cap \N^n$ and hence $P_I=P'_I$. By Meyer's theorem \cite{meyer1974existence}, the integer hull of a rational polyhedron is itself a polyhedron. Hence, $P_I=P'_I$ is a polyhedron.

It is clear that the polyhedron $P_I$ is a downset of $\R_+^n$. Hence, by Lemma~\ref{lem: downset-in-R+}, $P_I$ is a packing polyhedron.  
\end{proof}

Now we are ready to present the proof of Theorem~\ref{theo: k-aggregation}.

\begin{proof}[Proof of Theorem~\ref{theo: k-aggregation}]
Let $P$ be a packing polyhedron defined by $P = \{x \in \R^n \mid A x \leq b, \ x \geq 0\}$, and let $k$ be a positive integer. Denote by $\mathcal{P}$ the collection of polyhedra of the form 
$$\{x\in\R^n \mid x\geq 0,\quad (\lambda^j)^\transp A x \leq (\lambda^j)^\transp b,\quad \forall j \in [k]  \},$$
for all possible $\lambda^1, \ldots, \lambda^k \in \R^m_+$. 

Since $A$ is nonnegative, for every $Q\in \mathcal{P}$ the set $Q\cap \N^n$ is an integer packing set. By Theorem~\ref{theo: packingset wqo}, the set of integer packing sets in $\R^n$ is a wqo under inclusion. Hence, it follows from the finite basis property that there is a finite subset $\mathcal{P}'\subseteq \mathcal{P}$ such that for any $Q\in \mathcal{P}$ there is a $Q'\in\mathcal{P'}$ such that $Q'\cap\N^n\subseteq Q\cap \N^n$, and hence also that $Q'_I\subseteq Q_I$. We conclude that 
$$
\A_k(P)=\bigcap \big\{Q_I: Q\in \mathcal{P}\big\}=\bigcap \big\{Q_I: Q\in \mathcal{P'}\big\}.
$$
Since by Lemma~\ref{lem: integerhull} the integer hull $Q_I$ is a packing polyhedron for every $Q\in\mathcal{P}'$, and the intersection of finitely many packing polyhedra is again a packing polyhedron, it follows that $\A_k(P)$ is a packing polyhedron. 
\end{proof}

\section{Generalization to non-polyhedral sets}
\label{sec extensions}

In this section, we provide the proofs of our generalizations of Theorem~\ref{theo: k-aggregation} to non-polyhedral sets.
In particular, we give the proofs of Theorem~\ref{theo: k-aggregation-general} and of Theorem~\ref{theo: LAST}.
We refer the reader to Section~\ref{sec intro} for the definitions of $\tA_k$ and of $\tA_\infty$.

\begin{proof}[Proof of Theorem~\ref{theo: k-aggregation-general}]
Define $\Lambda^+(D): = \Lambda(D) \cap \R^n_+$. We first show that in the definition of $\tA_k(D)$ we can replace $\Lambda(D)$ with $\Lambda^+(D)$, i.e., 
$$
\tA_k(D) = \bigcap_{f^1, \ldots, f^k \in \Lambda^+(D)} \conv(\{x \in \N^n \mid (f^j)^\transp x \leq \sup\{(f^j)^\transp d \mid d \in D\}, \ \forall j \in [k] \}).
$$
The containment $\subseteq$ is trivial, thus we only need to show the containment $\supseteq$.
Let $f \in \Lambda(D)$, and consider the associated valid inequality for $D$ given by $f^\transp x \leq  \sup\{f^\transp d \mid d \in D\}$. Since $D$ is a downset of $\R^n_+$, we know from Lemma~\ref{lem: domination} that $(f^+)^\transp x \leq \sup\{f^\transp d \mid d \in D\}$ is also valid for $D$, and dominates the original inequality in $\R^n_+$.
In particular, this implies that $\sup\{(f^+)^\transp d \mid d \in D\} \leq \sup\{f^\transp d \mid d \in D\}$, hence $f^+\in \Lambda^+(D)$ since the latter supremum is finite by assumption.
Hence, we have shown that $(f^+)^\transp x \leq \sup\{(f^+)^\transp d \mid d \in D\}$ dominates the original inequality $f^\transp x \leq  \sup\{f^\transp d \mid d \in D\}$ in $\R^n_+$. 
We have therefore proven the containment $\supseteq$. 

Lastly, we follow almost the exact same argument as in the proof of Theorem~\ref{theo: k-aggregation}, except now we consider the collection $\mathcal{P}$ of polyhedra of the form 
$$\{x \in \R^n \mid x\geq 0,\ (f^j)^\transp x \leq \sup\{(f^j)^\transp d \mid d \in D\}, \ \forall j \in [k]\},$$ 
for all possible $f^1, \ldots, f^k \in \Lambda^+(D)$. 
\end{proof}

We now turn our attention to the set $\tA_\infty$.

\begin{proof}[Proof of Theorem~\ref{theo: LAST}]
For any $f\in \Lambda(D)$ we denote $\beta_f:=\sup\{f^\transp d \mid d \in D\}$. As in the previous proof, we can restrict to $f\in \Lambda^+(D):=\Lambda(D) \cap \R^n_+$ in the definition of $\tA_\infty(D)$: 
$$
\tA_\infty(D)=\conv(\{x \in \N^n \mid f^\transp x \leq \beta_f,\ \forall f \in \Lambda^+(D)  \}). 
$$
For any $f\in \Lambda^+(D)$ let $S_f=\{x\in \N^n\mid f^\transp x\leq \beta_f\}$. Then $S_f$ is an integer packing set in $\R^n$. By Theorem~\ref{theo: packingset wqo}, the set of integer packing sets in $\R^n$ is a wqo under inclusion. Hence, it follows from the finite basis property that there is a finite subset $B\subseteq \Lambda^+(D)$ such that for every $f\in \Lambda^+(D)$ there is a $f'\in B$ for which $S_{f'}\subseteq S_f$. It follows that
$$
\tA_\infty(D)=\conv(\{x \in \N^n \mid f^\transp x \leq \beta_f, \ \forall f \in B \}).
$$
By Lemma~\ref{lem: integerhull}, it follows that $\tA_\infty(D)$ is a packing polyhedron. 
\end{proof}

\bibliography{cite}

\end{document}